\numberwithin{equation}{section}
\numberwithin{figure}{section}
\theoremstyle{plain}
\newtheorem*{thm*}{\protect\theoremname}
\theoremstyle{plain}
\newtheorem{thm}{\protect\theoremname}
\theoremstyle{plain}
\newtheorem{lem}[thm]{\protect\lemmaname}
\theoremstyle{plain}
\newtheorem{cor}[thm]{\protect\corollaryname}
\providecommand{\corollaryname}{Corollary}
\providecommand{\lemmaname}{Lemma}
\providecommand{\theoremname}{Theorem}
\begin{document}

\title{Cocycle Conjugacy of Free Bogoljubov Actions of $\mathbb{R}$}

\author{Joshua Keneda and Dimitri Shlyakhtenko}

\thanks{Research supported by NSF grant DMS-1762360.}

\email{jkeneda@southplainscollege.edu, shlyakht@math.ucla.edu}

\address{Department of Mathematics, UCLA, Los Angeles, CA 90095}
\address{Mathematics and Engineering Department, South Plains College, Levelland, TX 79336}

\begin{abstract}
We show that Bogoljubov actions of $\mathbb{R}$ on the free group
factor $L(\mathbb{F}_{\infty})$ associated to sums of
infinite multiplicity trivial and certain mixing representations are cocycle conjugate
if and only if the underlying representations are conjugate.
\end{abstract}

\maketitle

\section{Introduction}

Recall that two actions $\beta_{t}$, $\gamma_{t}$ of $\mathbb{R}$
on a von Neumann algebra $M$ are said to be \emph{conjugate} if $\beta_{t}=\alpha\circ\gamma_{t}\circ\alpha^{-1}$
for some automorphism $\alpha$ of $M$. The actions are said to be \emph{cocycle conjugate} if there exists a strongly continuous one-parameter
family of unitaries $u_{t}\in M$ and an automorphism $\alpha$ of
$M$ so that
\[
\beta_{t}(x)=\alpha(\textrm{Ad}_{u_{t}}(\gamma_{t}(\alpha^{-1}(x)))),\qquad\forall x\in M;
\]
in other words, $\beta_{t}$ and $\textrm{Ad}_{u_{t}}\circ\gamma_{t}$
are conjugate. Cocycle conjugacy is clearly a weaker notion of equivalence
than conjugacy. 

A consequence of cocycle conjugacy is an isomorphism between the crossed
product von Neumann algebras $M\rtimes_{\beta}\mathbb{R}$ and $M\rtimes_{\gamma}\mathbb{R}$.
This isomorphism takes $M$ to $M$ and sends the unitary $U_{t}\in L(\mathbb{R})\subset M\rtimes_{\beta}\mathbb{R}$
implementing the automorphism $\beta_{t}$ to the unitary $u_{t}V_{t}$,
where $V_{t}\in M\rtimes_{\gamma}\mathbb{R}$ is the implementing unitary
for $\gamma_{t}$. 

An important class of automorphisms of the free group factor $L(\mathbb{F}_{\infty}$)
are so-called \emph{free Bogoljubov automorphisms}, which are defined
using Voiculescu's free Gaussian functor. As a starting point, we
write $L(\mathbb{F}_{\infty})=W^{*}(S_{1},S_{2},\dots)$ where $S_{j}$
are an infinite free semicircular system. The closure in the $L^{2}$
norm of their real-linear span is an infinite dimensional real Hilbert
space. Voiculescu proved that any automorphism of that Hilbert space
extends to an automorphism of $L(\mathbb{F}_{\infty})$. In particular,
any representation of $\mathbb{R}$ on an infinite dimensional Hilbert
space canonically gives an action of $\mathbb{R}$ on $L(\mathbb{F}_{\infty})$. 

Motivated by the approach in \cite{Classification}, we prove the following theorem,
which states that for a large class of Bogoljubov automorphisms, cocycle
conjugacy and conjugacy are equivalent to conjugacy of the underlying
representations and thus gives a classification of such automorphisms
up to cocycle conjugacy.
\begin{thm*}
\label{thm:SameSpectrum-1}Let $\pi_{1},\pi_{1}'$ be two mixing
orthogonal representations of $\mathbb{R}$, and assume that $\pi_{1}\otimes\pi_{1}\cong\pi_{1}$,
$\pi_{1}'\otimes\pi_{1}'\cong\pi_{1}'$. Denote by $\mathbf{1}$ the
trivial representation of $\mathbb{R}$. Let
\[
\pi=(\mathbf{1}\oplus\pi_{1})^{\oplus\infty},\qquad\pi'=(\mathbf{1}\oplus\pi_{1}')^{\oplus\infty},
\]
and let $\alpha$ (resp.,
$\alpha'$) be the associated free Bogoljubov actions of $\mathbb{R}$ on $L(\mathbb{F}_{\infty})$.
Then $\alpha$ and $\alpha'$ are cocycle conjugate iff the representations
$\pi^{\oplus\infty}$ and $(\pi')^{\oplus\infty}$ are conjugate,
i.e.
\[
\pi^{\oplus\infty}=V(\pi')^{\oplus\infty}V^{-1}
\]
for some orthogonal isomorphism $V$ of the underlying real Hilbert
spaces.
\end{thm*}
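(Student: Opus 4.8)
The plan is to prove the two implications separately, treating the forward (spectral conjugacy $\Rightarrow$ cocycle conjugacy) direction as the routine one and the reverse direction as the substance. First I would record the harmless reduction that $\pi^{\oplus\infty}\cong\pi$ and $(\pi')^{\oplus\infty}\cong\pi'$, since each representation already carries infinite multiplicity; thus the spectral condition in the statement is literally $\pi\cong\pi'$, which (because the trivial summands $\mathbf{1}^{\oplus\infty}$ agree and $\pi_1,\pi_1'$ are mixing, hence atomless) amounts to the equality of the spectral measure classes of $\pi_1$ and $\pi_1'$ on $\widehat{\mathbb{R}}$. For the easy direction, given an orthogonal isomorphism $V$ of the underlying real Hilbert spaces with $V\pi'_tV^{-1}=\pi_t$, functoriality of Voiculescu's free Gaussian construction yields a trace-preserving $*$-isomorphism $\Theta_V\colon\Gamma(H'_{\mathbb{R}})\to\Gamma(H_{\mathbb{R}})$ satisfying $\Theta_V\circ\alpha'_t=\alpha_t\circ\Theta_V$, so $\alpha$ and $\alpha'$ are outright conjugate, a fortiori cocycle conjugate with trivial cocycle.

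For the reverse direction, assume $\alpha$ and $\alpha'$ are cocycle conjugate. I would first pass to the crossed products: a cocycle conjugacy induces a $*$-isomorphism $\Phi\colon M\rtimes_\alpha\mathbb{R}\to M\rtimes_{\alpha'}\mathbb{R}$ carrying the copy of $M$ onto $M$ and sending $U_t$ to $u_tV_t$, and since $\widehat{\alpha'}_s(u_tV_t)=e^{ist}u_tV_t$, this $\Phi$ intertwines the dual actions $\widehat{\alpha}$ and $\widehat{\alpha'}$. Hence the isomorphism class of the triple $(\widetilde M:=M\rtimes_\alpha\mathbb{R},\,M,\,\widehat{\alpha})$ is a cocycle-conjugacy invariant, and by Takesaki duality $M=\widetilde M^{\widehat{\alpha}}$ is recovered as the fixed-point algebra. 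The goal is therefore to read off the conjugacy class of $\pi$ from this triple.

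The next step is to identify $\widetilde M$ with a concrete model. I would show, via operator-valued (amalgamated) free probability, that $\widetilde M$ is generated by $L(\mathbb{R})\cong L^\infty(\widehat{\mathbb{R}})$ together with an $L^\infty(\widehat{\mathbb{R}})$-valued semicircular family whose covariance encodes precisely the spectral measure class of the complexified representation, while $\widehat{\alpha}_s$ acts as translation by $s$ on $\widehat{\mathbb{R}}$. Within $M$ itself the trivial summand $\mathbf{1}^{\oplus\infty}$ contributes a canonical $\alpha$-fixed copy of $L(\mathbb{F}_\infty)$, namely $\Gamma(H_{\mathbb{R}}^{\pi})$, and the mixing summand $\pi_1^{\oplus\infty}$ contributes a properly outer, spectrally atomless part; these features are what must survive the unknown automorphism and cocycle concealed in $\Phi$.

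The crux, and the step I expect to be the main obstacle, is to show that the spectral measure class of $\pi_1$ is genuinely recovered rather than scrambled by the intertwining automorphism and the cocycle $u_t$. Here I would exploit the tensor self-absorption $\pi\otimes\pi\cong\pi$ forced by $\pi_1\otimes\pi_1\cong\pi_1$ together with the infinite multiplicity: following the strategy of \cite{Classification}, self-absorption should allow any $\alpha$-cocycle to be untwisted into a coboundary after absorbing the model action, thereby reducing cocycle conjugacy to honest conjugacy, while the mixing hypothesis supplies the malleability and spectral-gap estimates needed to locate the canonical intertwiner and to pin down the $\alpha$-fixed $L(\mathbb{F}_\infty)$ up to conjugacy. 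The spectral measure class then emerges as the distribution of the infinitesimal generator of the recovered flow $\widehat{\alpha}$ acting on the single-particle (first-chaos) subspace, yielding equivalence of the measure classes of $\pi_1$ and $\pi_1'$ and hence $\pi\cong\pi'$. Controlling this untwisting-plus-recovery argument, i.e.\ checking that the cocycle can be eliminated \emph{and} that no spectral information is lost in the process, is where the real work lies.
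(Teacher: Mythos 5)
Your easy direction and your reduction $\pi^{\oplus\infty}\cong\pi$ are fine, and your crossed-product/dual-action setup is a reasonable (and standard) starting point. But the entire substance of the theorem is concentrated in the step you defer to ``following the strategy of \cite{Classification}, self-absorption should allow any $\alpha$-cocycle to be untwisted into a coboundary,'' and this is a genuine gap: no mechanism is given, and it is not the mechanism that works. The paper never untwists the cocycle and never reduces cocycle conjugacy to honest conjugacy of $\alpha$ and $\alpha'$ on all of $N$. Instead it runs a deformation/rigidity argument \emph{inside the single algebra} $M=N\rtimes_\alpha\mathbb{R}\cong N\rtimes_{\alpha'}\mathbb{R}$: (i) the trivial summand gives the amalgamated free product decomposition $M\cong(\Phi(H_0)\otimes L(\mathbb{R}))*_{L(\mathbb{R})}(\Phi(H_1)\rtimes_{\alpha_1}\mathbb{R})$, and the $\alpha$-fixed copy $\Phi(H_0)\cong L(\mathbb{F}_\infty)$ forces both $L_\alpha(\mathbb{R})'\cap M$ and $L_{\alpha'}(\mathbb{R})'\cap M$ to be non-amenable; (ii) mixing of $\pi_1$ enters not through ``malleability and spectral-gap estimates'' but through a solidity theorem for $\Phi(H_1)\rtimes_{\alpha_1}\mathbb{R}$ (Corollary \ref{cor:Solid}, proved via operator-valued semicircular systems and the Krogager--Vaes strong solidity results); (iii) Houdayer--Ueda rigidity for amalgamated free products then leaves only the alternative $L_{\alpha'}(\mathbb{R})\prec_M L_\alpha(\mathbb{R})$; (iv) a dedicated theorem (Theorem \ref{embedding}) converts this Popa intertwining into a partial isometry $v\in N$ with $v^*v\in N^{\alpha'}$, $vv^*\in N^{\alpha}$ spatially intertwining the two actions on compressions; (v) a free-probability compression argument shows the compressed action is again Bogoljubov for a representation conjugate to $\pi$. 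None of these ingredients, nor substitutes for them, appear in your outline.

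Your proposed final step also fails on its own terms: you want to read the measure class off ``the infinitesimal generator of $\widehat\alpha$ acting on the single-particle (first-chaos) subspace,'' but the first-chaos subspace is defined via the semicircular generators and is not canonical -- the unknown isomorphism of crossed products coming from the cocycle conjugacy has no reason to preserve it. That is precisely the difficulty the whole proof is designed to circumvent. The paper's actual extraction is different and uses the self-absorption hypothesis at this exact point: since $\pi\cong\pi\otimes\pi$, the unitary representation induced on all of $L^2(\Phi(H_\pi))$ is conjugate to $\mathbf{1}\oplus\pi^{\oplus\infty}$, so the measure class $\mathscr{C}_\pi$ is an invariant of the action up to \emph{conjugacy}; this is applied only after steps (i)--(v) have upgraded the cocycle conjugacy to a genuine conjugacy of compressed actions. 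Note finally that even the conclusion you aim for (eliminating the cocycle globally) is stronger than what the paper establishes or needs: conjugacy after compression by projections suffices, because infinite multiplicity makes the compressed representation conjugate to $\pi$ again.
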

It is worth noting that the conjugacy class of a representation of
$\mathbb{R}$ on a real Hilbert space is determined by the measure
class of its spectral measure (a measure on $\mathbb{R}$ satisfying
$\mu(-X)=\mu(X)$ for all Borel sets $X$) and a multiplicity function
which is measurable with respect to that class (for the purposes of
our Theorem, we may assume that this multiplicity function is identically
infinite). Our Theorem then states that, for Bogoljubov actions
satisfying the hypothesis of the Theorem, cocycle conjugacy occurs
if and only if these measure classes are the same.

\section{Preliminaries on conjugacy of automorphisms}

\subsection{Crossed products.}

If $M$ is a type II$_{1}$ factor and $\alpha_{t}:\mathbb{R}\to\textrm{Aut}(M)$
is a one-parameter group of automorphisms, the \emph{crossed product}
$M\rtimes_{\alpha}\mathbb{R}$ is of type II$_{\infty}$ with a canonical
trace $Tr$. Furthermore, the crossed product construction produces
in a canonical way a distinguished copy of the group algebra $L(\mathbb{R})$
inside the crossed product algebra. We denote this copy by $L_{\alpha}(\mathbb{R})$.
The relative commutant $L_{\alpha}(\mathbb{R})'\cap M\rtimes_{\alpha}\mathbb{R}$
is generated by $L_{\alpha}(\mathbb{R})$ and the fixed point algebra
$M^{\alpha}$. 

\subsection{Conjugacy of actions.}

Recall that if $\beta_{t}$ and $\gamma_{t}$ are cocycle conjugate,
each choice of a cocycle conjugacy produces an isomorphism $\Pi_{\gamma,\beta}$
of the crossed product algebras $M\rtimes_{\beta}\mathbb{R}$ and
$M\rtimes_{\gamma}\mathbb{R}$. Note that $\Pi_{\gamma,\beta}$ does
\emph{not} necessarily map $L_{\beta}(\mathbb{R})$ to $L_{\gamma}(\mathbb{R})$.
In fact, as we shall see, this is rarely the case even if we compare
the image $\Pi_{\gamma,\beta}(L_{\beta}(\mathbb{R}))$ with $L_{\gamma}(\mathbb{R})$
up to a weaker notion of equivalence, $\prec_{M\rtimes_{\gamma}\mathbb{R}}$
which was introduced by Popa in the framework of his deformation-rigidity
theory. Indeed, in parallel to Theorem 3.1 in \cite{Classification},
we show that, very roughly, conjugacy of $\Pi_{\gamma,\beta}(L_{\beta}(\mathbb{R}))$
and $L_{\gamma}(\mathbb{R})$ inside the crossed product is essentially
equivalent (up to compressing by projections) to \emph{conjugacy}
of the actual actions by an inner automorphism of $M$. 
\begin{thm}
\label{embedding} Let $M$ be a tracial von Neumann algebra with
a fixed faithful normal trace $\tau$. Suppose $\alpha,\beta:\mathbb{R}\rightarrow\text{Aut}(M)$
are two trace-preserving actions of $\mathbb{R}$ on $M$ which are
cocycle conjugate, and suppose that the only finite-dimensional $\alpha$-invariant
subspaces of $L^{2}(M)$ are those on which $\alpha$ acts trivially.
Fix any $q\in M^{\beta}$ a nonzero projection. The following are
equivalent:

(a) There exists a nonzero projection $r\in L_{\beta}(\mathbb{R})$
such that 
\[
\Pi_{\alpha,\beta}(L_{\beta}(\mathbb{R})qr)\prec_{M\rtimes_{\alpha}\mathbb{R}}L_{\alpha}(\mathbb{R})
\]

(b) There exists a nonzero partial isometry $v\in M$ such that $v^{*}v\in qM^{\beta}q$,
$vv^{*}\in M^{\alpha}$, and for all $x\in M$,
\[
\alpha_{t}(vxv^{*})=v\beta_{t}(x)v^{*}.
\]
\end{thm}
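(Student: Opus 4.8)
The plan is to make the cocycle conjugacy explicit and reduce both conditions to a single statement about the cocycle. Cocycle conjugacy yields the isomorphism $\Pi:=\Pi_{\alpha,\beta}:M\rtimes_{\beta}\mathbb{R}\to M\rtimes_{\alpha}\mathbb{R}$ with $\Pi(M)=M$; writing $U_{t}$ (resp. $V_{t}$) for the unitaries implementing $\beta$ (resp. $\alpha$), one has $\Pi(U_{t})=W_{t}:=z_{t}V_{t}$ for a strongly continuous $\alpha$-cocycle $z_{t}\in M$ (so $z_{s+t}=z_{s}\alpha_{s}(z_{t})$). Conjugating by $\Pi$ turns $\beta$ into the perturbed action $\alpha'_{t}:=\mathrm{Ad}_{z_{t}}\circ\alpha_{t}$, whence $\Pi(M^{\beta})=M^{\alpha'}$ and $\{W_{t}\}''=\Pi(L_{\beta}(\mathbb{R}))$ is a diffuse abelian algebra commuting with $M^{\alpha'}$. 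Setting $p:=\Pi(q)\in M^{\alpha'}$ and $s:=\Pi(r)\in\{W_{t}\}''$, condition (a) becomes
\[
\{W_{t}\}''\,p\,s\ \prec_{M\rtimes_{\alpha}\mathbb{R}}\ L_{\alpha}(\mathbb{R}),
\]
while condition (b), transported through $\Pi$ with $v_{0}:=\Pi|_{M}(v)$, becomes the existence of a nonzero partial isometry $v_{0}\in M$ with $v_{0}^{*}v_{0}\in pM^{\alpha'}p$, $v_{0}v_{0}^{*}\in M^{\alpha}$, and $z_{t}\alpha_{t}(v_{0})=v_{0}$; equivalently $W_{t}v_{0}=v_{0}V_{t}$. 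Thus the theorem reduces to showing that such a $v_{0}$ exists if and only if the displayed intertwining holds. Verifying that $z_{t}\alpha_{t}(v_{0})=v_{0}$ is equivalent to $\alpha_{t}(vxv^{*})=v\beta_{t}(x)v^{*}$ together with the support conditions is a routine computation using $v^{*}v\in M^{\beta}$ and $vv^{*}\in M^{\alpha}$.

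For (b)$\Rightarrow$(a) the identity $W_{t}v_{0}=v_{0}V_{t}$ immediately exhibits $v_{0}\in M\subset M\rtimes_{\alpha}\mathbb{R}$ as an explicit intertwiner: one has $\{W_{t}\}''v_{0}=v_{0}L_{\alpha}(\mathbb{R})$, so the $L^{2}$-closure of $\{W_{t}\}''v_{0}$ is a nonzero $\{W_{t}\}''$--$L_{\alpha}(\mathbb{R})$-bimodule which is cyclic, hence finitely generated, as a right $L_{\alpha}(\mathbb{R})$-module. By Popa's intertwining criterion this gives $\{W_{t}\}''\,p\,s\prec L_{\alpha}(\mathbb{R})$, where we choose $r$ to be a spectral projection of $L_{\beta}(\mathbb{R})$ for which $s=\Pi(r)$ meets the support projection $v_{0}v_{0}^{*}$; this is exactly statement (a).

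The substance is (a)$\Rightarrow$(b). Popa's theorem produces a nonzero partial isometry $w\in M\rtimes_{\alpha}\mathbb{R}$ and a normal $*$-homomorphism $\psi$ from a corner of $\{W_{t}\}''p\,s$ into $L_{\alpha}(\mathbb{R})$ with $y\,w=w\,\psi(y)$ for all $y$, $w^{*}w\in(\{W_{t}\}''ps)'\cap ps(M\rtimes_{\alpha}\mathbb{R})ps$, and $ww^{*}\in L_{\alpha}(\mathbb{R})'\cap M\rtimes_{\alpha}\mathbb{R}$. Since $L_{\alpha}(\mathbb{R})\cong L^{\infty}(\mathbb{R})$ is abelian, the continuous one-parameter unitary group $t\mapsto\psi(W_{t}ps)$ has, by Stone's theorem, the form $e^{ith}$ for a measurable real function $h$ on the spectrum, so that $W_{t}\,w=w\,e^{ith}$. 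Two steps remain: first, to show the frequencies match, i.e. that one may take $h(\xi)=\xi$, so that $W_{t}w=wV_{t}$ up to a central phase; and second, to replace $w$ by an element of $M$.

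Both steps are driven by the hypothesis that $\alpha$ admits no nonzero finite-dimensional invariant subspace of $L^{2}(M)$ on which it acts nontrivially. I would analyze $w$ through its spectral decomposition under the dual action $\hat{\alpha}_{\lambda}$, which scales $V_{t}\mapsto e^{i\lambda t}V_{t}$ and $W_{t}\mapsto e^{i\lambda t}W_{t}$ and has fixed-point algebra $M$. A genuine frequency mismatch in $h$, or a nonzero homogeneous component of $w$ of nonzero degree, would manufacture a nonzero element $m\in M$ obeying an eigenoperator relation of the form $\alpha_{t}(m)=e^{ict}\,u_{t}\,m$ with $c\neq0$, and hence a nonzero finite-dimensional $\alpha$-invariant subspace of $L^{2}(M)$ carrying a nontrivial action, contradicting the hypothesis. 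This forces $h=\mathrm{id}$ and $w$ to be $\hat{\alpha}$-invariant, so $w\in M$; then $W_{t}w=wV_{t}$ reads $z_{t}\alpha_{t}(w)=w$, and the support conditions place $w^{*}w$ under $p$ in $M^{\alpha'}$ and $ww^{*}$ in $M^{\alpha}$. Setting $v:=\Pi|_{M}^{-1}(w)$ returns the partial isometry of (b). I expect the main obstacle to be precisely this frequency-matching and descent-to-$M$ argument: because the grading is by $\mathbb{R}$ rather than $\mathbb{Z}$, the spectral bookkeeping is a direct-integral computation, and converting a frequency mismatch into an honest finite-dimensional $\alpha$-invariant subspace is the delicate point at which the no-finite-dimensional-subspace hypothesis must be deployed with care.
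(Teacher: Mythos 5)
Your cocycle reformulation and your (b)$\Rightarrow$(a) direction are fine and agree in substance with the paper, which verifies condition (4) of Theorem F.12 in Brown--Ozawa directly using the intertwiner $v^{*}$. The gap is in (a)$\Rightarrow$(b), which is where the entire content of the theorem lies. After invoking Popa's intertwining theorem and Stone's theorem to get $W_{t}w=we^{ith}$, you defer exactly the two steps that constitute the theorem --- forcing $h$ to be the identity and forcing $w$ into $M$ --- with an acknowledgement that they are ``delicate''; no argument is actually given for either. Worse, the mechanism you propose for producing a contradiction does not work as stated: a relation $\alpha_{t}(m)=e^{ict}u_{t}m$ with a nontrivial cocycle $u_{t}$ does \emph{not} yield a finite-dimensional $\alpha$-invariant subspace of $L^{2}(M)$, because the span of the orbit $\{\alpha_{t}(m)\}_{t}$ equals the span of $\{u_{t}m\}_{t}$, which is in general infinite dimensional. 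What such a relation does produce is an eigenvector of the \emph{twisted} one-parameter unitary group $\tilde{u}_{t}=u_{t}u_{t}^{\alpha}$ on $L^{2}(M)$ (where $u_{t}^{\alpha}$ is the standard implementation of $\alpha$; this is a group precisely because $u_{t}$ is an $\alpha$-cocycle). So the hypothesis, which speaks of $\alpha$-invariant subspaces, cannot be invoked at the point where you want to invoke it; bridging from the twisted group back to the hypothesis is precisely the hard step, and your sketch is silent on it.

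For comparison, the paper never passes through Popa's partial-isometry theorem inside the crossed product and so never faces the $\mathbb{R}$-graded spectral bookkeeping that stalls your argument. Instead it (i) uses the negation of the intertwining criterion for the unitaries $\Pi_{\alpha,\beta}(\lambda_{\beta}(t)qr)$ to extract a uniform constant: elements $x_{1},\dots,x_{k}\in qM$ and $\delta>0$ with $\sum_{i,j}|\tau(x_{i}^{*}w_{t}\alpha_{t}(x_{j}))|^{2}\geq\delta$ for all $t$; (ii) forms $X=\sum_{i}x_{i}e_{\tau}x_{i}^{*}$ in $B(L^{2}(M))=\langle M,e_{\tau}\rangle$, the basic construction of $\mathbb{C}\subset M$, and takes the element of minimal $\|\cdot\|_{2}$ in the ultraweakly closed convex hull of $\{\beta_{t}(X)\}_{t}$ to produce a nonzero \emph{finite-rank} projection invariant under the twisted group implementing $\beta$; (iii) applies the finite-dimensional-subspace hypothesis to conclude this projection is pointwise fixed; and (iv) pulls a partial isometry $V$ with $VV^{*}=e_{\tau}$ down to $v=T_{M}(V)\in M$, the identity $e_{\tau}\langle M,e_{\tau}\rangle e_{\tau}=\mathbb{C}e_{\tau}$ then yielding $\lambda_{t}\alpha_{t}(v)=vw_{t}$ and hence the intertwining in (b). If you want to rescue your route, you would need to actually carry out the direct-integral analysis you allude to, and at its end you would still need a statement about finite-dimensional invariant subspaces of the twisted group $\tilde{u}_{t}$ rather than of $\alpha$ itself --- the same pivot that the paper's averaging argument is engineered to reach.
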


\begin{proof}
To see that (a) implies (b), take $r$ as in (a), so that $\Pi_{\alpha,\beta}(L_{\beta}(\mathbb{R})qr)\prec_{M\rtimes_{\alpha}\mathbb{R}}L_{\alpha}(\mathbb{R})$,
and take $w_{t}\in M$ with $\text{Ad }w_{t}\circ\alpha_{t}=\beta_{t}$.

First, we claim that there's a $\delta>0$ for which there exist $x_{1},...,x_{k}\in qM$
with 
\[
\sum_{i,j=1}^{k}|\tau(x_{i}^{*}w_{t}\alpha_{t}(x_{j}))|^{2}\ge\delta
\]
for all $t$. Suppose for a contradiction that no such $\delta$ exists.
Then we can find a net $(t_{i})_{i\in I}$ such that 
\[
\lim_{i}\tau(x^{*}w_{t_{i}}\alpha_{t_{i}}(y))=0
\]
for any $x,y\in qM$.

But then for any $p,p'$ finite trace projections in $L_{\alpha}(\mathbb{R})$,
$s,s'\in\mathbb{R}$, and $x,y\in M$, we have (in the 2-norm from
the trace on $M\rtimes_{\alpha}\mathbb{R}$): 
\begin{align*}
\|E_{L_{\alpha}(\mathbb{R})}(p\lambda_{\alpha}(s)^{*}x^{*}\Pi_{\alpha,\beta}(\lambda_{\beta}(t_{i})q)y\lambda_{\alpha}(s')p')\|_{2} & =\|\lambda_{\alpha}(s)^{*}pE_{L_{\alpha}(\mathbb{R})}(x^{*}q\Pi_{\alpha,\beta}(\lambda_{\beta}(t_{i})qy)p'\lambda_{\alpha}(s'))\|_{2}\\
 & =\|pE_{L_{\alpha}(\mathbb{R})}((qx)^{*}w_{t_{i}}\alpha_{t_{i}}(qy))p'\lambda_\alpha(s'+t_{i})\|_{2}\\
 & =\|E_{L_{\alpha}(\mathbb{R})}((qx)^{*}w_{t_{i}}\alpha_{t_{i}}(qy))pp'\|_{2}\rightarrow0,
\end{align*}
where the last equality follows from the fact that $(qx)^{*}w_{t_{i}}\alpha_{t_{i}}(qy)\in M$,
so 
\[
E_{L_{\alpha}(\mathbb{R})}((qx)^{*}w_{t_{i}}\alpha_{t_{i}}(qy))=\tau((qx)^{*}w_{t_{i}}\alpha_{t_{i}}(qy)),
\]
and the latter term goes to zero by supposition for any $x,y\in M$.

Now note that linear combinations of terms of the form $x\lambda_{\alpha}(s)p$
(resp. $y\lambda_{\beta}(s')p'$) as above are dense in $L^{2}(M\rtimes_{\alpha}\mathbb{R},Tr),$
so by approximating $\Pi_{\alpha,\beta}(r)a$, (resp. $\Pi_{\alpha,\beta}(r)b$)
with such sums for any $a,b\in M\rtimes_{\alpha}\mathbb{R}$, it follows
from the above estimate that 
\[
\|E_{L_{\alpha}(\mathbb{R})}(a^{*}\Pi_{\alpha,\beta}(\lambda_{\beta}(t_{i})qr)b)\|_{2}\rightarrow0.
\]
But this contradicts $\Pi_{\alpha,\beta}(L_{\beta}(\mathbb{R})qr)\prec_{M\rtimes_{\alpha}\mathbb{R}}L_{\alpha}(\mathbb{R})$,
so the $\delta>0$ of our above claim exists.

We can thus find $\delta>0$, $x_{1},...,x_{k}\in qM$ such that $\sum_{i,j=1}^{k}|\tau(x_{i}^{*}w_{t}\alpha_{t}(x_{j}))|^{2}\ge\delta$
for all $t$.

Let us now consider the space $B(L^{2}(M))$ of bounded operators on
$L^{2}(M,\tau)$ and the rank-one orthogonal projection $e_{\tau}$
onto $1\in L^{2}(M,\tau)$. We can identify $(B(L^{2}(M)),e_{\tau})$
with the basic construction $\langle M, e_\tau \rangle$ for $\mathbb{C}\subset M$, so that $e_{\tau}$
is the Jones projection. Let $\hat{\tau}$ be the usual trace on $B(L^{2}(M))$,
satisfying $\hat{\tau}(xe_{\tau}y)=\tau(xy)$ for all $x,y\in M$.
Finally, for a finite-rank operator $Q=\sum_{i}y_{i}e_{\tau}z_{i}^{*}$
with $y_{i},z_{i}\in M$, let
\[
T_{M}(Q)=\sum y_{i}z_{i}^{*}\in M.
\]
Then $T_{M}$ extends to a normal operator-valued weight from the
basic construction to $M$ satisfying $\hat{\tau}=\tau\circ T_{M}$
(i.e. $T_{M}$ is the pull-down map). 

Consider now the positive element 
\[
X=\sum_{i=1}^{k}x_{i}e_{\tau}x_{i}^{*},
\]
together with the following normal positive linear functional on $\langle M,e_{\tau}\rangle$:
\[
\psi(T)=\sum_{i=1}^{k}\hat{\tau}(e_{\tau}x_{i}^{*}Tx_{i}e_{\tau}).
\]

Note that $T_{M}(X)=\sum_{i=1}^{k}x_{i}x_{i}^{*}\in M$, so in particular
$\|T_{M}(X)\|<\infty$.

For every $t\in\mathbb{R}$, we have: 
\begin{align*}
\psi(\beta_{t}(X)) & =\sum_{i,j}\hat{\tau}(e_{\tau}x_{i}^{*}w_{t}\alpha_{t}(x_{j})e_{\tau}\alpha_{t}(x_{j})^{*}w_{t}^{*}x_{i}e_{\tau})\\
 & =\sum_{i,j}|\tau(x_{i}^{*}w_{t}\alpha_{t}(x_{j})|^{2}\ge\delta>0.
\end{align*}

Now consider $K$, the ultraweak closure of the convex hull of $\{\beta_{t}(X):t\in\mathbb{R}\}$
inside $q\langle M,e_{\tau}\rangle q$. Note that by normality of
$\psi$, $\psi(x)\ge\delta$ for any $x\in K$.

Since $K$ is convex and $\|\cdot\|_{2}$-closed, there exists a unique
$X_{0}\in K$ of minimal 2-norm. But since the 2-norm is invariant
under $\beta$, we must have that $\|\beta_{t}(X_{0})\|_{2}=\|X_{0}\|_{2}$
for all $t$, so by uniqueness of the minimizer, $X_{0}$ is itself
fixed by the extended $\beta$ action (and nonzero since $\psi(X_{0})\ge\delta$).
Also, by ultraweak lower semicontinuity of $T_{M}$, we know that
$\|T_{M}(X_{0})\|\le\|T_{M}(X)\|<\infty$.

Take a nonzero spectral projection $e$ of $X_{0}$. Then $e$ is
still $\beta$-invariant and satisfies $\|T_{M}(e)\|<\infty$. But
this means that $\hat{\tau}(e)=\tau(T_{M}(e))<\infty$, so $e$ must
be a finite rank projection, since $\hat{\tau}$ corresponds to the
usual trace $Tr$ on the trace-class operators in $B(L^{2}(M),\tau)$.

Now since $e_{\tau}$ has central support 1 in $\langle M,e_{\tau}\rangle$
(and because $e_{\tau}$ is minimal), we have that there exists $V$
a partial isometry in $\langle M,e_{\tau}\rangle$ such that $V^{*}V=f\le e$
and $VV^{*}=e_{\tau}$. We remark that $f$ remains $\beta$-invariant,
since $e$ was finite rank, and our finite-dimensional invariant subspaces
are all fixed by the action. Note also that $e\le q$ (since $X_{0}\in q\langle M,e_{\tau}\rangle q$),
so that $V=Vq=e_{\tau}V$.

Applying the pull-down lemma, we see that: 
\[
V=e_{\tau}V=e_{\tau}(T_{M}(e_{\tau}V))=e_{\tau}T_{M}(V).
\]
Set $v=T_{M}(V)$ and note that because $\|T_{M}(V^{*}V)\| \le \|T_{M}(e)\|<\infty$,
we have $v\in M$, and $V=e_{\tau}v$.

Since $e_{\tau}\langle M,e_{\tau}\rangle e_{\tau}=\mathbb{C}e_{\tau}$,
and since $V$ is left-supported by $e_{\tau}$, we have that for
each $t$ there exists a $\lambda_{t}\in\mathbb{C}$ such that $\lambda_{t}e_{\tau}=Vw_{t}{\alpha_{t}}(V^{*})$.
Note that since $Vw_{t}{\alpha_{t}}(V^{*})(Vw_{t}{\alpha_{t}}(V^{*}))^{*}=Vw_{t}{\alpha_{t}}(V^{*}V)w_{t}^{*}V^{*}=V{\beta_{t}}(e)V^{*}=VV^{*}=e_{\tau}$,
the last equality of the previous sentence implies that $\lambda_{t}\overline{\lambda_{t}}=1$.
We also have: 
\begin{align*}
e_{\tau}\lambda_{t}\alpha_{t}(v) & =\lambda_{t}e_{\tau}\alpha_t(v)=\lambda_{t}e_{\tau}{\alpha_t}(V)\\
 & =Vw_{t}{\alpha_{t}}(V^{*}V)=V{\beta_{t}}(e)w_{t}=Vw_{t}\\
 & =e_{\tau}vw_{t}.
\end{align*}

Thus, applying the pull-down map, we have that $\lambda_{t}\alpha_{t}(v)=vw_{t}$,
and, replacing $v$ by its polar part if necessary, we've found a
partial isometry in $M$, conjugation by which intertwines the actions.
We have for any $x\in M$: 
\begin{align*}
\alpha_{t}(vxv^{*})=\alpha_{t}(v)\alpha_{t}(x)\alpha_{t}(v^{*})=\overline{\lambda_{t}}vw_{t}\alpha_{t}(x)w_{t}^{*}v^{*}\lambda_{t}=v\beta_{t}(x)v^{*}.
\end{align*}

Furthermore, with some applications of $\alpha_{t}(v)=\overline{\lambda_{t}}vw_{t}$,
we see that 
\[
\beta_{t}(v^{*}v)=w_{t}\alpha_{t}(v^{*}v)w_{t}^{*}=w_{t}(w_{t}^{*}v^{*}\lambda_{t})(\overline{\lambda_{t}}vw_{t})w_{t}^{*}=v^{*}v,
\]
and 
\[
\alpha_{t}(vv^{*})=(\overline{\lambda_{t}}vw_{t})(w_{t}^{*}v^{*}\lambda_{t})=vv^{*},
\]
so we've found the promised intertwiner.

Conversely, assume that we have $v\in M$ satisfying $v^{*}v\in qM^{\beta}q$,
$vv^{*}\in M^{\alpha}$, and $\alpha_{t}(vxv^{*})=v\beta_{t}(x)v^{*}$
for all $x\in M$. Take $w_{t}\in M$ with $\text{Ad }w_{t}\circ\alpha_{t}=\beta_{t}$.
Then, as above, we have $vw_{t}=\lambda_{t}\alpha_{t}(v)$, for some
$\lambda_{t}\in \mathbb{T}$. Multiplying both sides by $\overline{\lambda_{t}}$
and absorbing this factor into $w_{t}$, we may assume without loss
of generality that $\lambda_{t}=1$ for all $t$, so we have $vw_{t}=\alpha_{t}(v)$.

Now let $\lambda_{t}^{\alpha}$ (resp., $\lambda_{t}^{\beta}$) denote
the canonical unitaries that implement the respective actions on $M$
in the crossed product $M\rtimes_{\alpha}\mathbb{R}$ (resp., $M\rtimes_{\beta}\mathbb{R}$).
Then the relation $vw_{t}=\alpha_{t}(v)$ implies $v\Pi_{\alpha,\beta}(\lambda_{t}^{\beta})=\lambda_{t}^{\alpha}v$.
Furthermore, for any finite trace projection $r\in L_{\beta}(\mathbb{R})$,
we have $v\Pi_{\alpha,\beta}(qr)=vq\Pi_{\alpha,\beta}(r)=v\Pi_{\alpha,\beta}(r)\neq0$,
so $v^{*}$ is a partial isometry that witnesses $\Pi_{\alpha,\beta}(L_{\beta}(\mathbb{R})qr)\prec_{M\rtimes_{\alpha}\mathbb{R}}L_{\alpha}(\mathbb{R})$
(e.g. see condition (4) of Theorem F.12 in \cite{BO}). Thus, (b) implies (a). 
\end{proof}

\section{Cocycle conjugacy of Bogoljubov Automorphisms}

\subsection{Free Bogoljubov automorphisms.}

Let $\pi$ be an orthogonal representation of $\mathbb{R}$ on a real
Hilbert space $H_{\mathbb{R}}$. Recall that Voiculescu's free Gaussian
functor associates to $H_{\mathbb{R}}$ a von Neumann algebra
\[
\Phi(H_{\mathbb{R}})=\{s(h):h\in H_{\mathbb{R}}\}''\cong L(\mathbb{F}_{\dim H_{\mathbb{R}}})
\]
where $s(h)=\ell(h)+\ell(h)^{*}$ and $\ell(h)$ is the creation operator
$\xi\mapsto h\otimes\xi$ acting on the full Fock space $\mathscr{F}(H)=\bigoplus_{n\geq0}(H_{\mathbb{R}}\otimes_{\mathbb{R}}\mathbb{C})^{\otimes n}$.
Denoting by $\Omega$ the unit basis vector of $(H_{\mathbb{R}}\otimes_{\mathbb{R}}\mathbb{C})^{\otimes0}=\mathbb{C}$,
it is well-known that the vector-state
\[
\tau(\cdot)=\langle\Omega,\cdot\Omega\rangle
\]
defines a faithful trace-state on $\Phi(H_{\mathbb{R}})$. Furthermore,
$\mathbb{R}$ acts on $\mathscr{F}(H)$ by unitary transformations
$U_{t}=\bigoplus_{n\geq0}(\pi\otimes1)^{\otimes n}$, and conjugation
by $U_{t}$ leaves $\Phi(H_{\mathbb{R}})$ globally invariant thus
defining a strongly continuous one-parameter family of \emph{free
Bogoljubov automorphisms}
\[
t\mapsto\alpha_{t}\in\textrm{Aut}(\Phi(H_{\mathbb{R}})).
\]

Note that if $\pi$ is such that $\pi\otimes\pi$ and $\pi$ are conjugate (as 
representations of $\mathbb{R}$), then the representation
$U_{t}$ is conjugate to $\mathbf{1}\oplus\pi^{\oplus\infty}$. 

A complete invariant for the orthogonal representation $\pi$ consists
of the absolute continuity class $\mathscr{C}_{\pi}$ of a measure
$\mu$ on $\mathbb{R}$ satisfying the symmetry condition $\mu(B)=\mu(-B)$
for all $\mu$-measurable sets $B$ and a $\mu$-measurable multiplicity
function $n:\mathbb{R}\to\mathbb{N}\cup\{+\infty\}$ satisfying $n(x)=n(-x)$
almost surely in $x$. In particular, assuming that $\pi\cong\pi\otimes\pi$
(i.e., that for some (hence any) probability measure $\mu\in\mathscr{C}_{\pi}$
that generates $\mathscr{C}_{\pi}$, $\mu*\mu$ and $\mu$ are mutually absolutely continuous), then the measure class $\mathscr{C}_{\pi}$
is an invariant of $\alpha$ up to conjugacy (since it can be recovered
from $U_{t}$, the unitary representation induced by $\alpha_{t}$
on $L^{2}(\Phi(H_{\mathbb{R}})$). 

Recall that the representation $\pi$ is said to be \emph{mixing}
if for all $\xi,\eta\in H_{\mathbb{R}}$, $\lim_{|t|\to\infty}\langle\xi,\pi(t)\eta\rangle\to0$.
This is equivalent to saying that for some (hence any) probability
measure $\mu\in\mathscr{C}_{\pi}$ that generates $\mathscr{C}_{\pi}$,
the Fourier transform satisfies $\hat{\mu}(t)\to0$ whenever $t\to\pm\infty$.

\subsection{Operator-valued semicircular systems.\label{subsec:OpValuSemSys}}

The crossed product $\Phi(H_{\mathbb{R}})\rtimes_{\alpha}\mathbb{R}$
has a description in terms of so-called operator-valued semicircular
systems (see \cite[Examples 2.8, 5.2]{A-valued}). Decompose $\pi=\bigoplus_{i\in I}\pi_{i}$
into cyclic representations $\pi_{i}$ with cyclic vectors $\xi_{i}$.
Let $\mu_{i}$ be the measure with Fourier transform $t\mapsto\langle\xi_{i},\pi_{i}(t)\xi_{i}\rangle$,
and denote by $\eta_{i}:L^{\infty}(\mathbb{R})\to L^{\infty}(\mathbb{R})$
the completely positive map given by
\[
\eta_{i}(f)(x)=\int f(y)d\mu(x-y).
\]
Then \cite[Proposition 2.18]{A-valued} shows that $\Phi(H_{\mathbb{R}})\rtimes_{\alpha}\mathbb{R}=W^{*}(L(\mathbb{R}),S_{i}:i\in I)$
where $S_{i}$ are free with amalgamation over $L(\mathbb{R})\cong L^{\infty}(\mathbb{R})$
and each $S_{i}$ is an $L^{\infty}(\mathbb{R})$-valued semicircular
system with covariance $\eta_{i}$. 

Operator-valued semicircular variables can be associated to any normal
self-adjoint completely positive map on $L(\mathbb{R})\cong L^{\infty}(\mathbb{R})$.
In particular, given any measure $K$ on $\mathbb{R}^{2}$ satisfying
$\pi_{x}K,\pi_{y}K\prec\textrm{Lebesgue measure}$ and $dK(x,y)=dK(y,x)$
(here $\pi_{x},\pi_{y}$ are projections on the two coordinate axes),
we can construct an $L^{\infty}(\mathbb{R})$-valued semicircular
variable $S=S_{K}$ with covariance
\[
\eta(f)(x)=\int f(y)dK(x,y).
\]
If $K'$ is absolutely continuous with respect to $K$, then $W^{*}(L^{\infty}(\mathbb{R}),S_{K'})\subset W^{*}(L^{\infty}(\mathbb{R}),S_{K})$;
if $K=\sum K_{j}$ with $K_{j}$ disjoint, then $S_{K_{j}}$ are free
with amalgamation over $L^{\infty}(\mathbb{R}).$ 

The algebra $W^{*}(L^{\infty}(\mathbb{R}),S_{K})$ is denoted $\Phi(L^{\infty}(\mathbb{R}),\eta)$.
For our choices of $\eta$ it is semifinite and $L^{\infty}(\mathbb{R})$
is in the range of a conditional expectation.

If $I\subset\mathbb{R}$ is a finite interval and $K$ is a measure
on $I^{2}$ satisfying $\pi_{x}K,\pi_{y}K\prec\textrm{Lebesgue measure}$
and $dK(x,y)=dK(y,x)$, then one can in a similar way associate a
completely positive map to $K$ and consider an $L^{\infty}(I)$-semicircular
variable $S_{K}$. This time, composition of the conditional expectation
onto $L^{\infty}(I)$ with integration with respect to (rescaled)
Lebesgue measure on $L^{\infty}(I)$ gives rise to a normal faithful
trace on the algebra $\Phi(L^{\infty}(I),\eta)=W^{*}(L^{\infty}(I),S_{K})$.

We call measures $K$ (on $\mathbb{R}^{2}$ or on the square of some
finite interval $I$) satisfying the conditions $\pi_{x}K,\pi_{y}K\prec\textrm{Lebesgue measure}$
and $dK(x,y)=dK(y,x)$ \emph{kernel measures.}

\subsection{Solidity of certain algebras generated by operator-valued semicircular
systems.}

Let $\eta$ be a normal self-adjoint completely positive map defined
on the von Neumann algebra $A = L^{\infty}(\mathbb{T})$ (with Haar measure on $\mathbb{T}$). By \cite[Corollary 4.2]{Thin}, if the $A,A$-bimodule associated to $\eta$
is mixing (see Def. 2.2 of that paper for a definition), then
$\Phi(A,\eta)$ is strongly solid, and in particular, solid: the relative
commutant of any diffuse abelian von Neumann subalgebra of $\Phi(A,\eta)$
is amenable. As noted in \cite[Proposition 7.3.4]{Thin} and its surrounding remarks, if $\mu$ is a measure on $\mathbb{T}$
so that its Fourier transform $\hat{\mu}$ satisfies $\lim_{n\to\pm\infty}\hat{\mu}(n)=0$
(i.e., $\mu$ is a measure associated to a mixing representation),
then the bimodule associated to the completely positive map $\eta:f\mapsto f*\mu$
is mixing. We record the following lemma, whose proof is straightforward
from the definition of mixing bimodules:
\begin{lem}
Suppose that $H,H'$ are mixing $A,A$-bimodules, $p_{0}\in A$
is a nonzero projection, and $K\subset H$ is an $A,A$-submodule.
Then:

(i) $H\oplus H'$ is mixing; (ii) $K$ is mixing; (iii) $p_{0}Hp_{0}$ is mixing as a $p_{0}A,p_{0}A$-bimodule.
\end{lem}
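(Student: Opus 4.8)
The plan is to reduce each of the three assertions to the defining property of a mixing bimodule (Def.~2.2 of \cite{Thin}), in the form convenient here: writing $H^{\circ}$ for the set of left-and-right $A$-bounded vectors, the bimodule $H$ is mixing precisely when, for every sequence $(a_{n})$ in the unit ball $(A)_{1}$ with $a_{n}\to0$ $\sigma$-weakly and for all $\xi,\eta\in H^{\circ}$, one has
\[
\sup_{y\in(A)_{1}}|\langle a_{n}\xi y,\eta\rangle|\longrightarrow0.
\]
Each of (i)--(iii) then amounts to checking how this quantity behaves under, respectively, forming a direct sum, passing to a sub-bimodule, and compressing by $p_{0}$. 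I use the unit-ball formulation (rather than one phrased via unitaries) precisely because it is stable under these operations; this matters in (iii), where a ``unitary'' of the corner is only a partial isometry of $A$.

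For (i), the bounded vectors of $H\oplus H'$ are exactly the pairs $(\xi,\xi')$ with $\xi\in H^{\circ}$ and $\xi'\in(H')^{\circ}$, and the inner product splits as a sum, so for weakly null $(a_{n})$,
\[
\sup_{y\in(A)_{1}}|\langle a_{n}(\xi\oplus\xi')y,\eta\oplus\eta'\rangle|\le\sup_{y}|\langle a_{n}\xi y,\eta\rangle|+\sup_{y}|\langle a_{n}\xi'y,\eta'\rangle|,
\]
and both summands tend to $0$ by mixing of $H$ and of $H'$. For (ii), if $K\subset H$ is a closed $A,A$-submodule then the boundedness condition is computed with the same left and right actions, so $K^{\circ}=K\cap H^{\circ}$ and the relevant inner products are restrictions; hence the supremum in the mixing condition for $K$ is \emph{literally} the quantity for $H$, which tends to $0$.

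Part (iii) is where I expect the only real bookkeeping. Here one passes from $A$ to the corner $B=p_{0}Ap_{0}$, which equals $p_{0}A$ since $A$ is abelian, equipped with the renormalized trace $\tau_{B}=\tau(p_{0})^{-1}\tau|_{B}$. Three things must be matched across the inclusion $B\subset A$. First, $B$ is a von Neumann subalgebra onto which the $\sigma$-weak topology restricts, so a sequence that is weakly null in $B$ is weakly null in $A$, and $(B)_{1}\subseteq(A)_{1}$. Second, a vector $\zeta=p_{0}\zeta p_{0}$ that is $B,B$-bounded is also $A$-bounded: for $a\in A$, commutativity gives $a\zeta=(p_{0}a)\zeta$ with $p_{0}a\in B$, and $\|p_{0}a\|_{2,\tau_{B}}\le\tau(p_{0})^{-1/2}\|a\|_{2,\tau}$, so $a\mapsto a\zeta$ is bounded on $L^{2}(A)$ (and symmetrically on the right); thus $(p_{0}Hp_{0})^{\circ}$ sits inside $H^{\circ}$ under this identification. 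With these facts, for $\zeta,\eta\in(p_{0}Hp_{0})^{\circ}$ and $(a_{n})\subset(B)_{1}$ weakly null,
\[
\sup_{y\in(B)_{1}}|\langle a_{n}\zeta y,\eta\rangle|\le\sup_{y\in(A)_{1}}|\langle a_{n}\zeta y,\eta\rangle|\longrightarrow0
\]
by mixing of $H$, so $p_{0}Hp_{0}$ is mixing over $B$.

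The only genuine obstacle, then, is the transfer in (iii): matching the two bounded-vector conditions and reconciling the trace normalizations $\tau$ and $\tau_{B}$ across $B\subset A$. Commutativity of $A$ is exactly what makes $p_{0}a$ land back in $B$ and keeps the $2$-norm comparison clean, so the argument remains elementary; for the application $A=L^{\infty}(\mathbb{T})$ this hypothesis holds automatically.
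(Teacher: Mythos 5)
Parts (i) and (ii) of your argument are fine: they involve only the coefficient quantity $\sup_{b\in(A)_1}|\langle a_{n}\xi b,\eta\rangle|$ for sequences $(a_{n})$ taken in the \emph{same} algebra $A$, and they go through verbatim whether one quantifies over weakly null unitaries or over weakly null elements of the unit ball. The problem sits exactly where you located it, in (iii), but your resolution assumes the difficulty away rather than addressing it. Definition 2.2 of \cite{Thin} quantifies over sequences of \emph{unitaries} $a_{n}\in A$ tending weakly to $0$; your ``unit-ball formulation'' is formally stronger, and you assert the equivalence (``$H$ is mixing precisely when...'') without proof. This equivalence is not a cosmetic rewording: it fails for general tracial $A$. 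For instance, if $A=\mathbb{C}\oplus L^{\infty}[0,1]$, then $A$ admits no weakly null sequence of unitaries at all (the first coordinate of a unitary has modulus $1$, so it cannot tend to $0$ against the normal functional reading off that coordinate), hence \emph{every} $A$-bimodule is mixing in the unitary sense; yet the trivial bimodule $L^{2}(A)$ fails your unit-ball condition, since $a_{n}=(0,w_{n})$ with $w_{n}$ powers of a Haar unitary is a weakly null sequence of contractions with $\sup_{b\in(A)_1}|\langle a_{n}\hat{1}b,\hat{1}\rangle|=\|a_{n}\|_{1}$ bounded below. So the implication you need (unitary mixing $\Rightarrow$ unit-ball mixing) depends on diffuseness of $A$, and proving it requires real work (e.g.\ approximating a weakly null contraction $a$ by the unitaries $a+ie^{i\arg a}r_{k}\sqrt{1-|a|^{2}}$ with $(r_{k})$ weakly null symmetries, controlling the error coefficient by the unitary hypothesis, and a diagonal extraction). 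Since this missing argument is precisely the content of (iii), your proof has a genuine gap at its crux.

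The good news is that (iii) admits a short proof directly from the unitary definition, with no need for the equivalence. Let $\zeta,\eta\in p_{0}Hp_{0}$ and let $(u_{n})$ be unitaries of $B=p_{0}A$ tending weakly to $0$ in $B$, hence in $A$ (as you note). If $p_{0}=1$ there is nothing to prove; otherwise $(1-p_{0})A$ is diffuse (here $A=L^{\infty}(\mathbb{T})$), so it contains unitaries $v_{n}$, e.g.\ powers of a Haar unitary, tending weakly to $0$. Then $\tilde{u}_{n}:=u_{n}+v_{n}$ are unitaries of $A$ tending weakly to $0$, and since $v_{n}\zeta=(v_{n}p_{0})\zeta=0$ and $\zeta v_{n}=\zeta(p_{0}v_{n})=0$, one has $\langle u_{n}\zeta b,\eta\rangle=\langle\tilde{u}_{n}\zeta b,\eta\rangle$ for every $b\in(B)_1\subset(A)_1$, and similarly on the right. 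Hence $\sup_{b\in(B)_1}|\langle u_{n}\zeta b,\eta\rangle|\le\sup_{b\in(A)_1}|\langle\tilde{u}_{n}\zeta b,\eta\rangle|\to0$ by mixing of $H$, so $p_{0}Hp_{0}$ is mixing over $p_{0}A$. Two smaller points: the definition quantifies over all vectors $\xi,\eta$ (the coefficient quantity is norm-continuous in $\xi,\eta$ uniformly in $n$), so your bookkeeping with bounded vectors and trace normalizations is unnecessary --- neither the operator-norm unit ball nor weak convergence sees a rescaling of the trace; and mixing also requires the symmetric condition $\sup_{b}|\langle b\xi a_{n},\eta\rangle|\to0$, which your write-up omits but which is handled identically.
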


We now make use of this lemma.
\begin{lem}
\label{lem:ACTrick}Let $(K_{j}:j\in J)$ be a family of kernel measures
on $\mathbb{R}^{2}$ and let $\eta_{j}$ be the associated completely
positive maps on $L^{\infty}(\mathbb{R})$. 

Assume that each $K_{j}$ can be written as a sum of measures $K_{j}=\sum_{i\in S(j)}K_{j}^{(i)}$
with $K_{j}^{(i)}$ disjoint, and so that $K_{j}^{(i)}$ is supported
on the square $I_{j}^{(i)}\times I_{j}^{(i)}$ for a finite interval
$I_{j}^{(i)}$. Finally, suppose that there exist measures $\hat{K}_{j}^{(i)}$
on $I_{j}^{(i)}\times I_{j}^{(i)}$, so that $K_{j}^{(i)}$ is absolutely
continuous with respect to $\hat{K}_{j}^{(i)}$ and so that the associated
completely positive map
\[
\hat{\eta}_{j}^{(i)}:f\mapsto(x\mapsto\int f(y)d\hat{K}_{j}^{(i)}(x,y))
\]
defines a mixing $L^{\infty}(I_{j}^{(i)})$-bimodule.

Let $X_{j}$ be $\eta_{j}$-semicircular variables over $L^{\infty}(\mathbb{R})$,
and assume that $X_{j}$ are free with amalgamation over $L^{\infty}(\mathbb{R})$.
Then the semifinite von Neumann algebra $M=W^{*}(L^{\infty}(\mathbb{R}),X_{j}:j\in J)$
is solid, in the sense that if $A\subset M$ is any diffuse abelian
von Neumann subalgebra generated by its finite projections, then $A'\cap M$
is amenable.
\end{lem}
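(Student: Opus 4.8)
The plan is to reduce solidity of the semifinite algebra $M$ to the strong solidity result \cite[Corollary 4.2]{Thin}, applied to finite corners of $M$, feeding that result the mixing hypothesis through the three stability properties recorded in the preceding Lemma. First I would pass to finite corners. Fix a bounded interval $I\subset\mathbb{R}$ and set $p=\chi_{I}\in L^{\infty}(\mathbb{R})$, a finite projection. Since $p$ lies in the amalgam $L^{\infty}(\mathbb{R})$, compression by $p$ commutes with the operator-valued semicircular construction: each $pX_{j}p$ is an $L^{\infty}(I)$-valued semicircular variable whose covariance is the kernel $K_{j}$ restricted to $I\times I$, the restricted variables remain free with amalgamation over $L^{\infty}(I)$, and $pMp=W^{*}(L^{\infty}(I),pX_{j}p:j\in J)$. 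Decomposing $K_{j}=\sum_{i}K_{j}^{(i)}$ and restricting to $I\times I$, we may write $pMp=\Phi(L^{\infty}(I),\eta^{I})$, where $\eta^{I}$ is the completely positive map attached to $K^{I}=\sum_{j,i}K_{j}^{(i)}|_{I\times I}$. Identifying a bounded interval with $\mathbb{T}$ (rescaled Lebesgue with Haar measure), this is exactly the setting of \cite[Corollary 4.2]{Thin}, the finite faithful trace being the one described in Subsection~\ref{subsec:OpValuSemSys}.

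Second, I would check that the $L^{\infty}(I)$-bimodule $H^{I}$ associated to $\eta^{I}$ is mixing, so that \cite[Corollary 4.2]{Thin} applies. By construction $H^{I}=\bigoplus_{j,i}H_{j}^{(i)}$, where $H_{j}^{(i)}$ is the bimodule of $K_{j}^{(i)}|_{I\times I}$, and each $H_{j}^{(i)}$ arises in three steps from the mixing bimodule $\hat{H}_{j}^{(i)}$ attached to $\hat{K}_{j}^{(i)}$. Since $K_{j}^{(i)}\ll\hat{K}_{j}^{(i)}$, the bimodule of $K_{j}^{(i)}$ embeds as an $L^{\infty}(I_{j}^{(i)})$-sub-bimodule of $\hat{H}_{j}^{(i)}$, hence is mixing by part (ii) of the preceding Lemma. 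Restricting the kernel to $I\times I$ corresponds to compressing by $\chi_{I\cap I_{j}^{(i)}}$, so part (iii) shows $H_{j}^{(i)}$ is mixing over $L^{\infty}(I\cap I_{j}^{(i)})$; as this summand is supported on the corner $\chi_{I\cap I_{j}^{(i)}}$, the mixing condition is unchanged when it is regarded over the larger algebra $L^{\infty}(I)$. Finally part (i), applied to the (possibly countably infinite) direct sum, shows that $H^{I}$ is mixing over $L^{\infty}(I)$. Therefore $pMp=\Phi(L^{\infty}(I),\eta^{I})$ is strongly solid, and in particular solid.

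Third, I would deduce solidity of $M$ from solidity of all the finite corners $\chi_{I}M\chi_{I}$. Let $A\subset M$ be diffuse abelian generated by its finite projections and put $N=A'\cap M$; note $A\subseteq N$. For a nonzero finite projection $e\in A$ one has $eNe=(Ae)'\cap eMe$, with $Ae$ diffuse abelian in the finite algebra $eMe$. Using semifiniteness together with the comparison of projections, $e$ is subequivalent to $\chi_{I}$ for a sufficiently large interval $I$ (working inside a central summand if $M$ is not a factor), so $eMe$ is isomorphic to a corner of the solid algebra $\chi_{I}M\chi_{I}$ and is itself solid. Hence $eNe$ is amenable for every finite $e\in A$. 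Letting $e$ run over the finite projections of $A$, which increase to $1$ since $A$ is generated by them, amenability of $N$ follows from its stability under corners of full central support and under increasing limits.

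The step I expect to be the main obstacle is the bookkeeping in the second paragraph: verifying that the mixing property genuinely survives the three operations applied to infinitely many pieces simultaneously, in particular that a summand mixing only over a corner $L^{\infty}(I\cap I_{j}^{(i)})$ still contributes a mixing summand over all of $L^{\infty}(I)$, and that an infinite direct sum of mixing bimodules remains mixing. A secondary technical point is the semifinite-to-finite reduction of the third paragraph when $M$ is not a factor, where one must compare an arbitrary finite projection of $A$ with the intervals $\chi_{I}$ and argue that amenability of all finite corners of $N$ forces amenability of $N$.
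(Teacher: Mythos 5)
Your reduction collapses at its first step: the identification $\chi_{I}M\chi_{I}=W^{*}(L^{\infty}(I),\chi_{I}X_{j}\chi_{I}:j\in J)$ is false in general. Even granting that each $\chi_{I}X_{j}\chi_{I}$ is an $L^{\infty}(I)$-valued semicircular element with the restricted covariance and that freeness with amalgamation passes to the corner, compression by a projection does \emph{not} commute with taking generated von Neumann algebras. The corner $\chi_{I}M\chi_{I}$ is the closed span of compressed words $\chi_{I}\,a_{0}X_{j_{1}}a_{1}\cdots X_{j_{k}}a_{k}\,\chi_{I}$, and these include ``excursion'' elements such as $\chi_{I}X_{j}\,\chi_{I^{c}}a\,X_{j}\chi_{I}$, which are nonzero whenever $K_{j}$ has mass outside $I\times I$ --- exactly the situation here, since the squares $I_{j}^{(i)}\times I_{j}^{(i)}$ are not contained in $I\times I$ (in the application, Corollary \ref{cor:Solid}, they form an overlapping chain covering a neighborhood of the whole diagonal). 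Such elements do not lie in $W^{*}(L^{\infty}(I),\chi_{I}X_{j}\chi_{I})$: in the extreme case of a kernel supported entirely off $I\times I$ one has $\chi_{I}X_{j}\chi_{I}=0$ while $\chi_{I}X_{j}\chi_{I^{c}}X_{j}\chi_{I}\neq0$. So the algebra to which you apply \cite[Corollary 4.2]{Thin} is a proper subalgebra of $\chi_{I}M\chi_{I}$, its strong solidity says nothing about the corner, and your third paragraph then has nothing to exhaust $M$ with. A secondary error in the same step: you merge all restricted kernels into a single completely positive map $\eta^{I}$, but the kernels $K_{j}^{(i)}|_{I\times I}$ for \emph{different} $j$ need not be disjoint (disjointness is only assumed within each $S(j)$), and for non-disjoint kernels $\Phi(L^{\infty}(I),\sum_{j}\eta_{j})$ is strictly smaller than the algebra generated by a free family with covariances $\eta_{j}$ --- already for two free scalar semicircular elements, $W^{*}(X+Y)\subsetneq W^{*}(X,Y)$.

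The paper's proof avoids compression entirely and goes in the opposite direction: it \emph{enlarges} $M$. Using $K_{j}^{(i)}\prec\hat{K}_{j}^{(i)}$ and the disjointness of the $K_{j}^{(i)}$ for fixed $j$, one gets $M\subset\hat{M}=W^{*}(L^{\infty}(\mathbb{R}),\hat{X}_{j}^{(i)}:j,i)$ with a conditional expectation onto $M$, so it suffices to prove $\hat{M}$ solid. Now $\hat{M}$ is the amalgamated free product over $L^{\infty}(\mathbb{R})$ of the single-variable algebras $\hat{M}_{j}^{(i)}=W^{*}(L^{\infty}(\mathbb{R}),\hat{X}_{j}^{(i)})$, and \cite[Theorem 4.4]{Rigidity} localizes any failure of solidity to a single factor $\hat{M}_{j}^{(i)}$. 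Only at that one-variable level is your corner identification actually valid: since $\hat{K}_{j}^{(i)}$ is supported in the full square $I_{j}^{(i)}\times I_{j}^{(i)}$, the variable $\hat{X}_{j}^{(i)}$ lives entirely in the corner $\chi_{I_{j}^{(i)}}$, whence $\hat{M}_{j}^{(i)}=L^{\infty}(\mathbb{R}\setminus I_{j}^{(i)})\oplus W^{*}(L^{\infty}(I_{j}^{(i)}),\hat{X}_{j}^{(i)})$ exactly, and the finite summand is solid by \cite[Corollary 4.2]{Thin}. The finite-square support hypothesis exists precisely to make this one-variable splitting exact; it cannot be parlayed into a splitting of the whole algebra over an arbitrary interval, which is what your argument needs. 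The fact that your proof never invokes a free product rigidity theorem should itself have been a warning: some such input is indispensable to pass from solidity of the individual free factors to solidity of their amalgamated free product.
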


\begin{proof}
Denote by $\hat{X}_{j}^{(i)}$ the $\hat{\eta}_{j}^{(i)}$-semicircular
family, and assume that $\hat{X}_{j}^{(i)}$ are free with amalgamation
over $L^{\infty}(\mathbb{R})$. Let $\hat{M}=W^{*}(L^{\infty}(\mathbb{R}),\{\hat{X}_{j}^{(i)}:j\in J,i\in S(j)\})$.
Since $K_{j}=\sum_{i\in S(j)}K_{j}^{(i)}$ is a disjoint sum and $K_{j}^{(i)}$
is absolutely continuous with respect to $\hat{K}_{j}^{(i)}$, we conclude
that $M\subset\hat{M}$ and moreover $M$ is in the range of a conditional
expectation from $\hat{M}$. Thus is sufficient to prove that $\hat{M}$
is solid. 

By freeness with amalgamation, we know that $\hat{M}$ is the amalgamated
free product of the algebras $\hat{M}_{j}^{(i)}=W^{*}(L^{\infty}(\mathbb{R}),\hat{X}_{j}^{(i)})$.
Thus by \cite[Theorem 4.4]{Rigidity}, if $B\subset\hat{M}$ is an abelian algebra generated
by its finite projections and $B'\cap\hat{M}$ is non-amenable, then
$B\prec_{\hat{M}}\hat{M}_{j}^{(i)}$ for some $j\in J$ and $i\in S(j)$
and moreover it follows that $\hat{M}_{j}^{(i)}$ is not solid. But
\[
\hat{M}_{j}^{(i)}\cong L^{\infty}(\mathbb{R}\setminus I_{j}^{(i)})\oplus W^{*}(L^{\infty}(I_{J}^{(i)}),\hat{X}_{j}^{(i)})
\]
and the (finite) von Neumann algebra $W^{*}(L^{\infty}(I_{J}^{(i)}),\hat{X}_{j}^{(i)})$
is solid by \cite[Corollary 4.2]{Thin}, which is a contradiction.
\end{proof}
\begin{cor}
\label{cor:Solid}Suppose that $\pi$ is a mixing orthogonal
representation of $\mathbb{R}$ on a real Hilbert space $H_{\mathbb{R}}$,
and let $\alpha$ be the free Bogoljubov action on $\Phi(H_{\mathbb{R}})$
associated to $\pi$. Then the semi-finite von Neumann algebra $M=\Phi(H_{\mathbb{R}})\rtimes_{\alpha}\mathbb{R}$
is solid: if $B\subset M$ is an diffuse abelian subalgebra generated
by its finite projections, then $B'\cap M$ is amenable.
\end{cor}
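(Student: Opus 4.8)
The plan is to bring $M$ into exactly the form handled by Lemma~\ref{lem:ACTrick} and then to verify its hypotheses. Using the discussion in Subsection~\ref{subsec:OpValuSemSys}, I would decompose $\pi=\bigoplus_{j\in J}\pi_{j}$ into cyclic pieces with cyclic vectors $\xi_{j}$, which presents
\[
M=\Phi(H_{\mathbb{R}})\rtimes_{\alpha}\mathbb{R}=W^{*}(L^{\infty}(\mathbb{R}),S_{j}:j\in J),
\]
where the $S_{j}$ are free with amalgamation over $L^{\infty}(\mathbb{R})$ and each $S_{j}$ is $\eta_{j}$-semicircular for the convolution map $\eta_{j}(f)(x)=\int f(y)\,d\mu_{j}(x-y)$, with $\mu_{j}$ the symmetric probability measure satisfying $\hat{\mu}_{j}(t)=\langle\xi_{j},\pi_{j}(t)\xi_{j}\rangle$. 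Since $\pi$ is mixing each $\pi_{j}$ is mixing, so $\hat{\mu}_{j}(t)\to0$ as $|t|\to\infty$. The kernel measure of $\eta_{j}$ is the band measure $K_{j}$ on $\mathbb{R}^{2}$ given informally by $dK_{j}(x,y)=dx\,d\mu_{j}(x-y)$; symmetry of $\mu_{j}$ gives $dK_{j}(x,y)=dK_{j}(y,x)$ and both marginals are Lebesgue, so $K_{j}$ is a kernel measure. It then suffices to check that $(K_{j})_{j\in J}$ satisfies the hypotheses of Lemma~\ref{lem:ACTrick}, whose conclusion is precisely the asserted solidity.

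For the disjoint decomposition, I would fix $j$ and partition $\mathbb{R}=\bigsqcup_{n}I_{n}$ into unit intervals $I_{n}=[n,n+1)$. The rectangles $\{I_{m}\times I_{n}\}$ partition $\mathbb{R}^{2}$, so grouping each off-diagonal block with its transpose yields the disjoint decomposition
\[
K_{j}=\sum_{n}K_{j}\big|_{I_{n}\times I_{n}}+\sum_{m<n}K_{j}\big|_{(I_{m}\times I_{n})\cup(I_{n}\times I_{m})},
\]
indexed by the countable set of unordered pairs $P=\{m,n\}$. Letting $J_{P}$ be the smallest interval containing $I_{m}\cup I_{n}$, the $P$-th summand $K_{j}^{(P)}$ is supported on the finite square $J_{P}\times J_{P}$, exactly as the Lemma requires; note the squares $J_{P}\times J_{P}$ may overlap, but the $K_{j}^{(P)}$ are mutually singular since the underlying rectangles are disjoint.

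It remains to dominate each $K_{j}^{(P)}$ by a kernel whose associated bimodule is mixing, and this is the step I expect to be the main obstacle, because $\mu_{j}$ may be singular and of unbounded support, so a sharp truncation would destroy Fourier decay. Instead I would fix a smooth, compactly supported $\chi_{P}\ge\mathbf{1}_{(-|J_{P}|,\,|J_{P}|)}$ and pass to the finite measure $\mu_{j}\chi_{P}$. Since $\hat{\chi}_{P}$ is Schwartz and $\hat{\mu}_{j}$ is bounded with $\hat{\mu}_{j}(t)\to0$, dominated convergence applied to $\widehat{\mu_{j}\chi_{P}}=\hat{\mu}_{j}*\hat{\chi}_{P}$ gives $\widehat{\mu_{j}\chi_{P}}(t)\to0$. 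Wrapping $\mu_{j}\chi_{P}$ onto a circle $\mathbb{T}_{L}$ of circumference $L>4|J_{P}|$ (so that $\mathrm{supp}\,\chi_{P}$ fits inside one period) produces a measure $\nu_{P}$ on $\mathbb{T}_{L}$ whose Fourier coefficients are samples $\hat{\nu}_{P}(k)=\widehat{\mu_{j}\chi_{P}}(2\pi k/L)\to0$; hence, by the remarks preceding the first lemma of this subsection (cf. \cite[Proposition 7.3.4]{Thin}), the convolution map $f\mapsto f*\nu_{P}$ defines a mixing $L^{\infty}(\mathbb{T}_{L})$-bimodule.

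Finally I would identify $J_{P}$ with an arc of $\mathbb{T}_{L}$ and compress by $p_{0}=\mathbf{1}_{J_{P}}$; by part (iii) of that first lemma the resulting $L^{\infty}(J_{P})$-bimodule is again mixing, and its kernel $\hat{K}_{j}^{(P)}$ is the restriction of the $\nu_{P}$-convolution kernel to $J_{P}\times J_{P}$. Because $L>4|J_{P}|$, for $x,y\in J_{P}$ the class $(x-y)\bmod L$ does not wrap, so $\hat{K}_{j}^{(P)}$ has density $d(\mu_{j}\chi_{P})(x-y)\ge d\mu_{j}(x-y)$ there, giving $K_{j}^{(P)}\ll\hat{K}_{j}^{(P)}$. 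With the disjoint decomposition into finite squares and these dominating mixing kernels in hand, Lemma~\ref{lem:ACTrick} applies to the family $(K_{j})_{j\in J}$ and yields that $M$ is solid in the stated sense.
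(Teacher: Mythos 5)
Your proof is correct, and it shares the paper's overall skeleton---present $M=W^{*}(L^{\infty}(\mathbb{R}),S_{j}:j\in J)$ via the operator-valued semicircular description of the crossed product and then verify the hypotheses of Lemma~\ref{lem:ACTrick}---but it realizes those hypotheses by a genuinely different route. The paper never faces the unbounded-support problem you single out as the main obstacle: it first re-chooses the cyclic decomposition so that the infinitesimal generator of each $\pi_{j}$ has spectrum in a finite interval $I_{j}\subset[-n_{j},n_{j}]$, which forces each $\mu_{j}$ to be compactly supported and the band kernel $K_{j}$ to have finite width; the disjoint pieces are then the differences of the squares $[-4n_{j}+i,4n_{j}+i]^{2}$ shifted along the diagonal, and each is dominated by the restriction of $K_{j}$ to one such square, which after wrapping that square onto a circle is governed by convolution with a measure whose Fourier coefficients are the samples $\hat{\mu}_{j}(k/8n_{j})\to0$, so no tapering is needed. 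You instead keep an arbitrary cyclic decomposition (so $\mu_{j}$ may be singular with unbounded support), cut $\mathbb{R}^{2}$ into unit-rectangle pairs, and solve the truncation problem analytically: a sharp cutoff could destroy Fourier decay, but multiplying by a smooth compactly supported taper preserves it, since $\widehat{\mu_{j}\chi_{P}}$ is (up to normalization) the convolution of the bounded function $\hat{\mu}_{j}$ vanishing at infinity with the $L^{1}$ function $\hat{\chi}_{P}$; wrapping to $\mathbb{T}_{L}$ and compressing by $\mathbf{1}_{J_{P}}$ via part (iii) of the mixing lemma then yields the dominating mixing kernels. The paper's spectral localization buys a trivial truncation step and uniform geometry (one circle size per $j$); your tapering argument buys independence from how the cyclic decomposition is chosen, making the verification work for the band kernel of any mixing cyclic representation as given. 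Two details you should make explicit: choose $\chi_{P}$ even and nonnegative, so that $\mu_{j}\chi_{P}$ is again a symmetric positive measure and $\hat{K}_{j}^{(P)}$ is a genuine kernel measure with self-adjoint completely positive map; and note that it is the combination of $\mathrm{supp}\,\chi_{P}$ fitting in one period with $L>4|J_{P}|$ that guarantees no wrap-around mass lands in $J_{P}\times J_{P}$, which your final paragraph uses but only partially states.
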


\begin{proof}
Our goal is to apply Lemma \ref{lem:ACTrick}. Fix any decomposition
of $\pi$ into cyclic representations $(\pi_{j}:j\in J)$ with associated
cyclic vectors $\xi_{j}$ in such a way that the spectrum of $\pi_{j}(t)$
is contained in the set $\exp(iI_{j}t)$ for a finite subinterval
$I_{j}\subset\mathbb{R}$. Let us fix integers $n_{j}$ so that $I_{j}\subset[-n_{j},n_{j}]$.  Selecting a possibly different 
set of cyclic vectors and subrepresentations $\pi_j(t)$, we may assume that $\pi(t)=\bigoplus_j \pi_j(t)$, and
that the spectrum of the infinitesimal generator of $\pi_j$ is contained in $I_j$.   
Denote by $\mu_{j}$ the measures with Fourier transform
\[
\hat{\mu}_{j}(t)=\langle\xi_{j},\pi(t)\xi_{j}\rangle.
\]
By assumption that $\pi$ is mixing, $\lim_{t\to\pm\infty}\hat{\mu}_{j}(t)=0$.
Moreover, by construction, the support of $\mu_{j}$ is contained
in $[-n_{j},n_{j}]$.

Let $\eta_{j}:L^{\infty}(\mathbb{R})\to L^{\infty}(\mathbb{R})$ be
the completely positive map given by convolution with $\mu_{j}$.
Then $\eta_{j}$ has an associated kernel measure $K_{j}$ given by
$dK_{j}(x,y)=\mu_{j}(x-y)$.

Let $K_{j}^{(i)}$ denote the restriction of $K_{j}$ to the region
$[-4n_{j}+i,4n_{j}+i]\times[-4n_{j}+i,4n_{j}+i] \setminus [-4n_j + i-1, 4n_j + i-1] \times  [-4n_j + i-1, 4n_j + i -1 ] $, $i\in\mathbb{Z}$, and let $\hat{K}_{j}^{(i)}$ be the restriction of $K_j$ to $[-4n_{j}+i,4n_{j}+i]\times[-4n_{j}+i,4n_{j}+i]$. If we identify
$[-4n_{j}+i,4n_{j}+i]$ with the circle, then the completely positive
map associated to $\hat{K}_{j}^{(i)}$ is given by convolution with
the measure $\mu'_{j}$ whose Fourier transform is given by $k\mapsto\hat{\mu}_{j}(k/8n_{j})$;
since $\lim_{t\pm\infty}\hat{\mu}_{j}(t)=0$, it follows that $\lim_{k\to\pm\infty}\hat{\mu}'_{j}(k)=0$,
so that the hypothesis of Lemma \ref{lem:ACTrick} is satisfied.
\end{proof}

\subsection{Cocycle conjugacy.}

We are now ready to prove the main result of this paper:
\begin{thm}
\label{thm:SameSpectrum}Let $\pi_{1},\pi_{1}'$ be two mixing
orthogonal representations of $\mathbb{R}$, and assume that $\pi_{1}\otimes\pi_{1}\cong\pi_{1}$,
$\pi_{1}'\otimes\pi_{1}'\cong\pi_{1}'$. Denote by $\mathbf{1}$ the
trivial representation of $\mathbb{R}$. Let
\[
\pi=(\mathbf{1}\oplus\pi_{1})^{\oplus\infty},\qquad\pi'=(\mathbf{1}\oplus\pi_{1}')^{\oplus\infty},
\]
and let $\alpha$ (resp.,
$\alpha'$) be the corresponding free Bogoljubov actions of $\mathbb{R}$ on $L(\mathbb{F}_{\infty})$.
Then $\alpha$ and $\alpha'$ are cocycle conjugate iff $\mathscr{C}_{\pi_{1}}=\mathscr{C}_{\pi_{1}'}$. 
\end{thm}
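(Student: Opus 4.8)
The plan is to prove both implications, the reverse one being routine and the forward one carrying all the weight.

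For the reverse implication, suppose $\mathscr{C}_{\pi_1}=\mathscr{C}_{\pi_1'}$. Since $\pi_1,\pi_1'$ are mixing these classes are nonatomic, so $\pi=(\mathbf 1\oplus\pi_1)^{\oplus\infty}$ and $\pi'=(\mathbf 1\oplus\pi_1')^{\oplus\infty}$ both have spectral measure class $\delta_0\cup\mathscr{C}_{\pi_1}=\delta_0\cup\mathscr{C}_{\pi_1'}$ together with multiplicity function identically $+\infty$. By the spectral classification of orthogonal representations of $\mathbb{R}$, $\pi$ and $\pi'$ are then conjugate by an orthogonal isomorphism $V$ of the underlying real Hilbert spaces, and functoriality of Voiculescu's free Gaussian construction promotes $V$ to a trace-preserving isomorphism $\Phi(V)$ conjugating $\alpha$ to $\alpha'$; in particular $\alpha$ and $\alpha'$ are cocycle conjugate.

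For the forward implication, assume $\alpha$ and $\alpha'$ are cocycle conjugate and write $M=\Phi(H_{\mathbb{R}})=L(\mathbb{F}_\infty)$. I would first record the structure of the crossed product. Decomposing $H_{\mathbb{R}}=H_0\oplus H_1$ into the trivial summand $\mathbf 1^{\oplus\infty}$ and the mixing summand $\pi_1^{\oplus\infty}$, we have $M=\Phi(H_0)*\Phi(H_1)$ with $\alpha$ trivial on $\Phi(H_0)$ and mixing on $\Phi(H_1)$, so the crossed product becomes the amalgamated free product
\[
M\rtimes_\alpha\mathbb{R}=\big(\Phi(H_0)\,\bar\otimes\,L_\alpha(\mathbb{R})\big)*_{L_\alpha(\mathbb{R})}\big(\Phi(H_1)\rtimes_\alpha\mathbb{R}\big)=:M_0*_{L_\alpha(\mathbb{R})}M_1 .
\]
Here $M_1$ is solid by Corollary \ref{cor:Solid} (as $\pi_1^{\oplus\infty}$ is mixing), while the relative-commutant computation of the Preliminaries gives $L_\alpha(\mathbb{R})'\cap(M\rtimes_\alpha\mathbb{R})=W^*(L_\alpha(\mathbb{R}),M^\alpha)=\Phi(H_0)\,\bar\otimes\,L_\alpha(\mathbb{R})=M_0$ (using $M^\alpha=\Phi(H_0)$), whence $L_\alpha(\mathbb{R})=Z(M_0)$ is the center of its own relative commutant, a commutant that is non-amenable since it contains the free group factor $\Phi(H_0)$. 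The identical statements hold on the $\alpha'$ side, with $M_0':=\Phi(H_0')\,\bar\otimes\,L_{\alpha'}(\mathbb{R})=L_{\alpha'}(\mathbb{R})'\cap(M\rtimes_{\alpha'}\mathbb{R})$ and $L_{\alpha'}(\mathbb{R})=Z(M_0')$. Writing $\Pi=\Pi_{\alpha,\alpha'}$ for the (trace-scaling) crossed-product isomorphism afforded by the cocycle conjugacy and setting $Q=\Pi(L_{\alpha'}(\mathbb{R}))$, it follows that $Q$ is diffuse abelian, generated by its finite projections, and that $Q'\cap(M\rtimes_\alpha\mathbb{R})=\Pi(M_0')$ is non-amenable.

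The heart of the argument is to verify hypothesis (a) of Theorem \ref{embedding} with $\beta=\alpha'$ and $q=1$, namely that $\Pi(L_{\alpha'}(\mathbb{R})r)\prec_{M\rtimes_\alpha\mathbb{R}}L_\alpha(\mathbb{R})$ for some nonzero projection $r$. I would deduce this from the amalgamated free product structure: since $Q$ is abelian with non-amenable relative commutant, the rigidity dichotomy \cite[Theorem 4.4]{Rigidity} (as already invoked for Lemma \ref{lem:ACTrick}) forces $Q\prec_{M\rtimes_\alpha\mathbb{R}}M_0$ or $Q\prec_{M\rtimes_\alpha\mathbb{R}}M_1$. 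The alternative $Q\prec M_1$ is excluded by solidity of $M_1$: intertwining a diffuse abelian algebra into $M_1$ drags a corner of its relative commutant into $M_1$ as well, forcing $Q'\cap(M\rtimes_\alpha\mathbb{R})$ to be amenable, contrary to the previous paragraph. Thus $Q\prec M_0=\Phi(H_0)\,\bar\otimes\,L_\alpha(\mathbb{R})$, and one must finally upgrade this to $Q\prec L_\alpha(\mathbb{R})$. The hard part will be precisely this last upgrade: a diffuse abelian subalgebra of $\Phi(H_0)\,\bar\otimes\,L_\alpha(\mathbb{R})$ need not intertwine into the tensor factor $L_\alpha(\mathbb{R})$, so I would exploit that $Q$ is the center of its relative commutant $\Pi(M_0')\cong\Phi(H_0')\,\bar\otimes\,L^\infty(\mathbb{R})$ — whose non-amenable fibers must be absorbed by the $\Phi(H_0)$-direction of $M_0$ — to confine $Q=Z(\Pi(M_0'))$ itself to the base $L_\alpha(\mathbb{R})=Z(M_0)$, which is exactly where the careful amalgamated free product intertwining analysis is needed.

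Granting (a), Theorem \ref{embedding} applies: its hypothesis on finite-dimensional invariant subspaces holds because $\pi_1$ mixing makes $\mathscr{C}_{\pi_1}$ nonatomic, so the only atom of the spectral measure of the second-quantized representation $U_t$ on $L^2(M)$ is at $0$, i.e. every finite-dimensional $\alpha$-invariant subspace is fixed. We therefore obtain a nonzero partial isometry $v\in M$ with $v^*v\in M^{\alpha'}$, $vv^*\in M^\alpha$, and $\alpha_t(vxv^*)=v\,\alpha'_t(x)\,v^*$ for all $x\in M$; that is, $\mathrm{Ad}_v$ conjugates the corner action $\alpha'|_{v^*vMv^*v}$ to $\alpha|_{vv^*Mvv^*}$, whence the two corner actions induce unitarily equivalent representations of $\mathbb{R}$ on the respective compressed $L^2$-spaces. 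Finally I would read off the invariant: because $\pi_1\otimes\pi_1\cong\pi_1$, the measure class of $U_t$ is stable under convolution and equals $\delta_0\cup\mathscr{C}_{\pi_1}$, with nonatomic continuous part $\mathscr{C}_{\pi_1}$; compressing by a nonzero projection of the diffuse fixed algebra $M^\alpha$ leaves this continuous class unchanged (all multiplicities being infinite), and likewise on the $\alpha'$ side. Equating the continuous parts of the two unitarily equivalent corner representations yields $\mathscr{C}_{\pi_1}=\mathscr{C}_{\pi_1'}$, completing the forward implication.
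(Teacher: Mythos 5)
Your overall strategy is exactly the paper's: decompose $H_{\pi}=H_{0}\oplus H_{1}$ into the trivial and mixing parts, write the crossed product as the amalgamated free product $(\Phi(H_{0})\otimes L_{\alpha}(\mathbb{R}))*_{L_{\alpha}(\mathbb{R})}(\Phi(H_{1})\rtimes\mathbb{R})$, combine the dichotomy of \cite[Theorem 4.4]{Rigidity} with Corollary \ref{cor:Solid} to exclude intertwining into the mixing side, feed condition (a) of Theorem \ref{embedding} to get a partial isometry $v$ conjugating corner actions, and read off the measure class. But there is a genuine gap, and it sits exactly at the step you yourself flag: you never prove the upgrade from $Q\prec_{M\rtimes_{\alpha}\mathbb{R}}\Phi(H_{0})\otimes L_{\alpha}(\mathbb{R})$ to $Q\prec_{M\rtimes_{\alpha}\mathbb{R}}L_{\alpha}(\mathbb{R})$; you only say you ``would exploit'' that $Q$ is the center of its relative commutant, and that a ``careful amalgamated free product intertwining analysis is needed.'' That is a plan, not an argument, and without it condition (a) of Theorem \ref{embedding} is simply not verified, so the entire forward implication is unsupported. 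As you correctly note, the implication is not formal: a diffuse abelian subalgebra of $L(\mathbb{F}_{\infty})\otimes L^{\infty}(\mathbb{R})$ (for instance one sitting inside $\Phi(H_{0})\otimes 1$) need not intertwine into $L^{\infty}(\mathbb{R})$. Closing this requires using the extra structure of $Q$ --- for example, intertwining $Q$ \emph{jointly} with the non-amenable factor $\Pi(\Phi(H_{0}'))$ that commutes with it, and then exploiting that $\Phi(H_{0})\otimes L_{\alpha}(\mathbb{R})$ is ``solid fiberwise over its center'' (a direct-integral version of Ozawa solidity of $L(\mathbb{F}_{\infty})$), so that a diffuse abelian algebra not intertwinable into $L_{\alpha}(\mathbb{R})$ must have relative commutant amenable over $L_{\alpha}(\mathbb{R})$ --- or else extracting the stronger conclusion directly from the precise statement of the cited rigidity theorem, which is how the paper's chain of deductions proceeds.

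A second, more minor discrepancy: your endgame differs from the paper's and is under-justified as written. After Theorem \ref{embedding} produces $v$, the paper does not merely compare compressed $L^{2}$-representations; it proves, using partial isometries $v_{i}\in\Phi(H_{0})$ and Voiculescu's compression formulas for semicircular families \cite{FRV}, that the corner action $\hat{\alpha}=\alpha|_{pNp}$ is again a free Bogoljubov action of a representation conjugate to $\pi$, and then invokes the fact that $\mathscr{C}_{\pi}$ is a conjugacy invariant when $\pi\cong\pi\otimes\pi$. Your substitute claim --- that compressing by $p\in M^{\alpha}$ leaves the continuous spectral class unchanged ``all multiplicities being infinite'' --- is not a reason: an invariant subspace can have strictly smaller spectral class no matter what the multiplicities are. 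The claim is true, but it needs an argument, e.g.\ the freeness computation
\[
\langle ps(h)p,\,U_{t}\,ps(k)p\rangle=\tau(p)^{2}\langle h,\pi_{t}(k)\rangle,\qquad h,k\in H_{1},
\]
which shows that $h\mapsto\tau(p)^{-1}ps(h)p$ isometrically embeds $\pi_{1}^{\oplus\infty}$ into the corner representation, while absolute continuity in the other direction is automatic since $L^{2}(pNp)\subset L^{2}(N)$ is an invariant subspace. This second gap is fixable in a few lines; the first one is the serious defect of the proposal.
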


\begin{proof}
If $\mathscr{C}_{\pi_{1}}=\mathscr{C}_{\pi_{1}'}$, then $\pi$ and
$\pi'$ are conjugate representations and the associated Bogoljubov
actions are conjugate; thus it is the opposite direction that needs
to be proved. The proof will be broken into several steps. 

If $H_{\pi}$ is the representation space of $\pi$, then $H_{\pi}\cong H_{0}\oplus H_{1}$
corresponding to the decomposition $\pi=\mathbf{1}^{\infty}\oplus\pi_{1}^{\oplus\infty}$.
Let $N=\Phi(H_{\pi})\cong L(\mathbb{F}_{\infty})$. Then $N$ decomposes
as a free product $N\cong\Phi(H_{0})*\Phi(H_{1})$; moreover, the
free Bogoljubov action is also a free product $\alpha=\mathbf{1}*\alpha_{1}$.
Note that the subalgebra $L(\mathbb{F}_{\infty})\cong\Phi(H_{0})\subset N$
is fixed by the action $\alpha$. In particular, the crossed product
$M=N\rtimes_{\alpha}\mathbb{R}$ decomposes as a free product:
\[
M=N\rtimes_{\alpha}\mathbb{R}\cong(\Phi(H_{0})\otimes L(\mathbb{R}))*_{L(\mathbb{R})}(\Phi(H_{1})\rtimes_{\alpha_{1}}\mathbb{R}).
\]

Let us assume that $\alpha$ and $\alpha'$ are cocycle conjugate.
Denote by $A=L_{\alpha}(\mathbb{R})\subset M$. Then $N\rtimes_{\alpha}\mathbb{R}\cong N\rtimes_{\alpha'}\mathbb{R}$
and thus (up to this identification, which we fix once and for all) also $L_{\alpha'}(\mathbb{R})\subset M$. Thus $A'\cap M\supset\Phi(H_{0})\cong L(\mathbb{F}_{\infty})$,
so that $A'\cap M$ is non-amenable. Exactly the same argument implies
that $L_{\alpha'}(\mathbb{R})\cap M$ is non-amenable. 

By \cite[Theorem 4.4]{Rigidity}, it follows from the amalgamated free product decomposition
of $M$ that $L_{\alpha'}(\mathbb{R})\prec_{M}\Phi(H_{0})\otimes L(\mathbb{R})$
or $L_{\alpha'}(\mathbb{R})\prec_{M}\Phi(H_{1})\rtimes_{\alpha_1}\mathbb{R}$.
But the latter is impossible by Corollary \ref{cor:Solid}, since
$\alpha_{1}$ comes from a mixing representation $\pi_{1}$.
Thus it must be that $L_{\alpha'}(\mathbb{R})\prec_{M}\text{\ensuremath{\Phi(H_{0})\otimes L(\mathbb{R})}}\cong L(\mathbb{F}_{\infty})\otimes L(\mathbb{R})$
and thus $L_{\alpha'}(\mathbb{R})\prec_{M}L_{\alpha}(\mathbb{R})$. 

By Theorem \ref{embedding}, $L_{\alpha'}(\mathbb{R})\prec_{M}L_{\alpha}(\mathbb{R})$
implies that there exists a nonzero partial isometry $v\in N$ such that
$v^{*}v\in N^{\alpha'}$, $vv^{*}\in N^{\alpha}$, and for all $x\in N$,
$\alpha_{t}(vxv^{*})=v\alpha'_{t}(x)v^{*}.$ 

Let $p=vv^{*}\in N^\alpha$, and denote by $\hat{\alpha}$
the restriction of $\alpha$ to $pNp$.

Let $H=H_{0}\oplus H_{1}$ be as above. By replacing $p\in\Phi(H_{0})$
with a subprojection and modifying $v$, we may assume that $\tau(p)=1/n$ for some $n$. Then we can find partial isometries
$v_{i}\in\Phi(H_{0})$, $i \in \{1,...,n\}$, such that $v_{i}v_{i}^{*}=p$
for all $i$ and $\sum_{i}v_{i}^{*}v_{i}=1.$ 

Let $\{s(h):h\in H_{1}\}$ be a semicircular family of generators
for $\Phi(H_{1})$. Then $N$ is generated by $\Phi(H_{0})\cup\{s(h):h\in H_{1}\}$,
so that $pNp$ is generated by $p\Phi(H_{0})p$ and $\{v_{i}s(h)v_{j}^{*}:1\leq i,j\leq n,h\in H_{1}\}$
\cite[Lemma 5.2.1]{FRV}. 

For $i,j\in \{1,...,n\}$ and $h\in H_{1}$, denote $S_{ij}(h)=\operatorname{Re}(n^{1/2}v_{i}s(h)v_{j}^{*})$,
$S'_{ij}(h)=\operatorname{Im}(n^{1/2}v_{i}s(h)v_{j}^{*})$. The
normalization is chosen so that in the compressed $W^{*}$-probability
space $(pNp,n\tau|_{pNp})$ these elements form a semicircular
family \cite[Prop. 5.1.7]{FRV}. So, all together, $pNp$ is generated
$*$-freely by $p\Phi(H_{0})p$ and the semicircular family $\{S_{ij}(h):h\in H_{1},1\leq i,j\leq n\}\cup\{S'_{ij}(h):h\in H_{1},1\leq i<j\leq n\}$.
The action of the restriction $\hat{\alpha}_{t}$ of $\alpha_{t}$
to $pNp$ is given, on these generators, as follows: $\hat{\alpha}_{t}(x)=x$
for $x\in p\Phi(H_{0})p$; $\hat{\alpha}_{t}(S_{ij}(h))=S_{ij}(\pi_t|_{H_{1}}(h))$,
$\hat{\alpha}_{t}(S'_{ij}(h))=S_{ij}'(\pi_t|_{H_{1}}(h))$. From this we see
that $\hat{\alpha}_{t}$ is once again a Bogoljubov automorphism but
corresponding to the representation $\mathbf{1}^{\oplus\infty}\oplus(\pi_{1}^{\oplus\infty})^{\oplus n^{2}}\cong\pi$.
Since by assumption $\pi_{1}\cong\pi_{1}\otimes\pi_{1}$, also $\pi\cong\pi\otimes\pi$
and so conjugacy of $\hat{\alpha}_{t}$ and $\hat{\alpha}'_{t}$ implies
equality of measure classes $\mathscr{C}_{\pi}$ and $\mathscr{C}_{\pi'}$
and thus of $\mathscr{C}_{\pi_{1}}$ and $\mathscr{C}_{\pi_{1}'}$. 
\end{proof}

\begin{bibdiv}
\begin{biblist}

\bib{Classification}{article}{
  title={Classification of a Family of Non-Almost Periodic Free Araki-Woods Factors},
  author={Houdayer, Cyril},
  author={Shlyakhtenko, Dimitri},
  author={Stefaan Vaes},
  journal={Journal of the European Mathematical Society},
  year={to appear}
  eprint={arXiv:1605.06057 [math.OA]}
  
}

\bib{BO}{book}{
  title={C*-algebras and Finite-dimensional Approximations},
  author={Brown, Nathanial Patrick},
  author = {Ozawa, Narutaka},
  volume={88},
  year={2008},
  publisher={American Mathematical Society}
}

\bib{A-valued}{article}{
  title={A-valued Semicircular Systems},
  author={Shlyakhtenko, Dimitri},
  journal={Journal of Functional Analysis},
  volume={166},
  number={1},
  pages={1--47},
  year={1999},
  publisher={Academic Press}
}

\bib{Thin}{article}{
  title={Thin {II$_1$} factors with no Cartan subalgebras},
  author={Krogager, Anna Sofie},
  author={Vaes, Stefaan},
  journal={Kyoto Journal of Mathematics},
  volume={59},
  number={4},
  pages={815-867},
  year={2019}
  eprint={arXiv:1611.02138 [math.OA]}
}

\bib{Rigidity}{article}{
  title={Rigidity of Free Product von Neumann Algebras},
  author={Houdayer, Cyril},
  author={Ueda, Yoshimichi},
  journal={Compositio Mathematica},
  volume={152},
  number={12},
  pages={2461--2492},
  year={2016},
  publisher={London Mathematical Society}
  eprint={arXiv:1507.02157 [math.OA]}
}

\bib{FRV}{book}{
  title={Free Random Variables},
  author={Voiculescu, Dan-Virgil},
  author={Dykema, Ken},
  author={Nica, Alexandru},
  number={1},
  year={1992},
  publisher={American Mathematical Society}
}

\end{biblist}
\end{bibdiv}

%\bibliography{references}
%\bibliographystyle{}

\end{document}